\newtheorem{proposition}{Proposition}
\newtheorem{theorem}{Theorem}
\newtheorem*{proposition*}{Proposition}
\newtheorem*{lemma*}{Lemma}
\newtheorem*{theorem*}{Theorem}
\newtheorem*{corollary*}{Corollary}
\newtheorem{remark}{Remark}
\newtheorem*{definition*}{Definition}
\newtheorem*{remark*}{Remark}
\newtheorem*{example*}{Example}
\DeclareMathOperator{\rad}{rad}
\begin{document}
\begin{center}
\textbf{A commutative  Bezout domain in  which
every maximal ideal is principal is an elementary divisor ring}
\end{center}
\vskip 0.1cm \centerline{\textbf{Zabavsky Bogdan}}

\vskip 0.3cm

\centerline{\footnotesize{Department of Algebra and Logic,  Ivan Franko National University of L'viv,  Ukraine}}
\vskip 0.5cm

\centerline{\footnotesize{October, 2012}}
\vskip 0.7cm

\footnotesize{\noindent\textbf{Abstract:} \textit{In this article we revisit a problem regarding Bezout domains,
namely, whether every Bezout domain is an elementary divisor domain.
We prove that a Bezout domain in which every maximal ideal is principal is
an elementary divisor ring.}

}

\vskip 1cm

\normalsize

All rings considered will be commutative and have the identity. A ring is
a Bezout ring if every its finitely generated ideal is a principal.

Kaplansky \cite{item1} defined the class of elementary divisor rings as those rings
$R$ for which every matrix $M$ over $R$ admits a diagonal reduction, that is
there exist invertible (unimodular) matrices $P$ and $Q$ such that $PMQ$ is a
diagonal matrix $D = (d_{ii})$, which the property that every $d_{ii}$ is a divisor of
$d_{i+1,i+1}$. He showed that if $R$ is an elementary divisor domain, then every
finitely presented module over $R$ is a direct sum of cyclic modules. It was
later shown in \cite{item2} that the converse is true answering a question of Warfield~\cite{item3}.

A ring $R$ is fractionaly regular if for every non-zero element $a$ from $R$
the classical quotient ring $Q_{cl}({}^{R}\!/\!_{\rad(aR)})$ is regular \cite{item4}. We say the ring $R$ has stable range 2 if whenever
$aR+bR+cR=R$, then there are $x, y\in R$ such that $(a+cx)R+(b+cy)R = R$
 \cite{item4}.

We say $R$ is a reduced ring if it has no nilpotent elements other than $0$.
Of course, this is equivalent to saying that the intersection of the minimal
prime ideals of $R$ is $0$.

For every ideal $I$ in $R$ we define the annihilator of $I$ by $I^\perp = \{x\in R \mid
ix=0, \forall  i\in I \}$. The nilradical of $R$ is denoted by $\rad(R)$ and we denote by
$Q_{cl}(R)$ the classical ring of quotients of $R$.

Following Faith  \cite{item5} a ring $R$ is zip if $I$ is an ideal and if $I^\perp = 0$ then
$I_1^\perp=0$ for a finitely generated ideal $I_1\subseteq I$. We introduce the concept of
mzip ring as a ring with the property: if $M$ is a maximal ideal and $M^\perp = 0$
then $M_1^\perp = 0$ for a finitely generated ideal $M_1\subseteq M$. Every zip ring is a
mzip ring and if every maximal ideal of Bezout ring is finitely generated, then ring
is a mzip ring.

An ideal $I$ of a ring $R$ is dense if it is an annihilator is zero. Thus $I$ is a
dense is and only if it is a faithfull $R$-module. If every dense maximal ideal
of $R$ contains a regular element then $R$ is evidently mzip.

Actually every dense maximal ideal $R$ contains a regular element if for
every maximal ideal of $Q_{cl}(R)$ the classical quotient ring is an annulet. The
rings with the property that maximal ideals are annulets are called Kasch
rings  \cite{item6}. A ring $R$ is McCoy if every finitely generated dense ideal contains
a regular element (In  \cite{item7} this is called "Property A").

Using the triviality that $I$ is an ideal of $Q_{cl}(R)$ and $I = I_0Q_{cl}(R)$, where
$I_0$ is annulet of $R$ (and in this case $I_0 = I\cap R$), one sees that $R$ is McCoy
if $Q_{cl}(R)$ is McCoy.

If $I$ is a finitely generated dense ideal of a Bezout ring $R$, then $I = aR$
for $a\in  R$, so $a \in  I$ is a regular element. Then a Bezout ring  is  a McCoy
ring and we obtain the following result:

\begin{proposition}\label{prop1} Let $R$ be  a mzip Bezout ring, then every dense
maximal ideal contains a regular element and $Q_{cl}(R)$ is a Bezout Kasch ring.
\end{proposition}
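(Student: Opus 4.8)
The plan is to treat the two assertions separately, since the first is a short consequence of the hypotheses while the second rests on it.

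For the first assertion I would argue exactly along the lines already sketched for the McCoy property. Let $M$ be a dense maximal ideal, so $M^\perp = 0$. Because $R$ is mzip, the hypothesis $M^\perp = 0$ yields a finitely generated ideal $M_1 \subseteq M$ with $M_1^\perp = 0$. Since $R$ is Bezout, $M_1$ is principal, say $M_1 = aR$; and $M_1^\perp = 0$ says precisely that $\operatorname{Ann}(a) = (aR)^\perp = 0$, i.e. $a$ is regular. As $a \in M_1 \subseteq M$, the maximal ideal $M$ contains the regular element $a$. This is the whole of the first claim, and I expect no difficulty here.

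For the second assertion, first note that $Q_{cl}(R) = S^{-1}R$, where $S$ is the set of regular elements, so it is a localization of the Bezout ring $R$ and is therefore again Bezout (a finitely generated ideal of $S^{-1}R$ is extended from a finitely generated, hence principal, ideal of $R$). The substantive point is that $T := Q_{cl}(R)$ is Kasch, i.e. that every maximal ideal of $T$ is an annulet. I would reduce this to showing that $T$ has no dense maximal ideal: if $\mathfrak{m}$ is maximal and $\operatorname{Ann}_T(\mathfrak{m}) \neq 0$, then choosing $0 \neq q$ with $q\mathfrak{m} = 0$ forces $\mathfrak{m} \subseteq \operatorname{Ann}_T(q) \neq T$, whence $\mathfrak{m} = \operatorname{Ann}_T(q)$ by maximality, so $\mathfrak{m}$ is an annulet. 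To obtain the required statement I would combine the first assertion with the transfer of the McCoy property: since a Bezout ring is McCoy and, as recorded before the Proposition, $R$ is McCoy if and only if $Q_{cl}(R)$ is McCoy, the ring $T$ is McCoy and coincides with its own total quotient ring, so in $T$ every regular element is a unit and every finitely generated proper ideal has nonzero annihilator. Contracting a maximal ideal $\mathfrak{m}$ of $T$ to the dense ideal $\mathfrak{m} \cap R$ of $R$, I would then invoke the equivalence noted before the Proposition between "every dense maximal ideal of $R$ contains a regular element'' and "every maximal ideal of $Q_{cl}(R)$ is an annulet'' to conclude $\operatorname{Ann}_T(\mathfrak{m}) \neq 0$.

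The main obstacle is precisely this last step. The annihilator of $\mathfrak{m}$ in $T$ is the intersection of the annihilators of its finitely generated (hence principal) subideals, and localization commutes only with finite intersections, so the regularity furnished by the mzip hypothesis for the finitely generated pieces does not by itself assemble into a nonzero annihilator of all of $\mathfrak{m}$. This is exactly the zip-type phenomenon that the mzip condition is designed to defeat, and carrying the passage from $R$ to its quotient ring through this finiteness gap — rather than the two essentially formal facts that $T$ is Bezout and that dense maximal ideals of $R$ carry regular elements — is where the real work of the Proposition lies.
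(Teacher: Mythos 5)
Your treatment of the first assertion is correct and is exactly the paper's own route: a Bezout ring is McCoy (a finitely generated dense ideal is principal, and a principal dense ideal has a regular generator), so the mzip hypothesis hands a dense maximal ideal $M$ a principal dense subideal $aR$, and $a\in M$ is the required regular element. Your remark that $Q_{cl}(R)$ is Bezout, being a localization of a Bezout ring, is also fine (the paper does not even spell this out). The problem is the Kasch assertion, and here your proposal stops exactly where a proof would have to begin. You propose to ``invoke the equivalence noted before the Proposition'' between ``every dense maximal ideal of $R$ contains a regular element'' and ``$Q_{cl}(R)$ is Kasch''; but the paper asserts only one implication, namely that the regular-element property follows \emph{from} Kaschness of $Q_{cl}(R)$ --- the converse of what you need --- and you then concede that you cannot bridge the gap. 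So the proposal genuinely proves only the first assertion. You should also know that the paper itself supplies no argument for this half: Proposition 1 is stated as an immediate consequence of the preceding discussion, which contains nothing in the needed direction.

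Moreover, the obstacle you sense is not a technical inconvenience but a fatal one at this level of generality. A maximal ideal $\mathfrak{m}$ of $T=Q_{cl}(R)$ contracts to a prime $P=\mathfrak{m}\cap R$ that is maximal among primes of $R$ consisting of zero divisors, and $\mathfrak{m}$ is dense in $T$ if and only if $P$ is dense in $R$; since $P$ need not be a \emph{maximal} ideal of $R$, the mzip hypothesis is never applicable to it. Indeed the implication fails: let $V$ be a valuation domain with value group $\mathbb{Q}\oplus\mathbb{Q}$ ordered lexicographically, let $I$ be the ideal of all elements whose value has first component at least $1$, and put $R=V/I$. Then $R$ is a chain ring, hence a local Bezout ring; the zero divisors of $R$ are exactly the classes of elements whose value has positive first component, and they form a dense non-maximal prime $Z$ with $\operatorname{Ann}_R(Z)=0$; the maximal ideal of $R$ contains the regular element of value $(0,1)$, so $R$ is mzip and satisfies the first assertion; yet $Q_{cl}(R)=R_Z$ is a chain ring whose unique maximal ideal $ZR_Z$ has zero annihilator, so it is Bezout but not Kasch. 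Consequently the second assertion cannot be reached from the stated hypotheses alone --- in the paper's actual application (Theorem 1) the ring is additionally reduced with projective socle, so every maximal ideal is either dense or generated by an idempotent --- and no completion of your argument, however ingenious, could have succeeded without such extra input.
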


\begin{proposition}
Let $R$ be a reduced Bezout ring in which every maximal
ideal is projective. Then $R$ is an mzip if and only if every maximal ideal of
$R$ is principal.
\end{proposition}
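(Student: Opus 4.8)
The plan is to prove the two implications separately, disposing of the easy direction quickly and concentrating on the converse. For the ``if'' direction, if every maximal ideal is principal then in particular every maximal ideal is finitely generated, and as already observed before Proposition~\ref{prop1} a Bezout ring whose maximal ideals are finitely generated is automatically mzip (given a dense maximal ideal $M$ one simply takes $M_1=M$). So the whole content lies in the ``only if'' direction, which I would prove by taking an arbitrary maximal ideal $M$ --- projective by hypothesis --- and splitting on whether $M$ is dense.

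First I would dispose of the case $M^\perp\neq 0$, which needs neither projectivity nor the mzip hypothesis. Writing $A=M^\perp$, the inclusion $M\subseteq A^\perp$ together with $A^\perp\neq R$ and the maximality of $M$ forces $M=A^\perp$. In a reduced ring $M\cap M^\perp=0$, since any $x$ in the intersection satisfies $x^2=0$; hence $M^\perp\not\subseteq M$ and maximality gives $M+M^\perp=R$. Writing $1=e+f$ with $e\in M$ and $f\in M^\perp$ then produces orthogonal idempotents with $M=eR$, so $M$ is principal.

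The heart of the argument, and the step I expect to be the main obstacle, is the dense case $M^\perp=0$. Here Proposition~\ref{prop1} supplies a regular element $a\in M$, and I would exploit the projectivity of $M$ through the dual basis lemma: choose families $\{x_i\}\subseteq M$ and $\{f_i\}\subseteq \operatorname{Hom}_R(M,R)$ with $x=\sum_i f_i(x)x_i$ (a finite sum) for each $x\in M$. The key computation is that for $m,m'\in M$ one has $m\,f(m')=m'\,f(m)$, so that in $Q_{cl}(R)$ every $f\in\operatorname{Hom}_R(M,R)$ acts as multiplication by the well-defined element $f(a)/a\in(R:M)$. Evaluating the dual basis expansion at the regular element $a$ then yields $1=\sum_{i=1}^n q_i x_i$ with $q_i\in(R:M)$ and $x_i\in M$, so that $M$ is invertible; reading the same relation against an arbitrary $m\in M$ shows $M=\sum_{i=1}^n Rx_i$ is finitely generated. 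Since $R$ is Bezout, a finitely generated ideal is principal, which finishes the case and hence the proof. The delicate points to get right are the identification $\operatorname{Hom}_R(M,R)\cong(R:M)$ via the regular element and the observation that the dual basis sum at $a$ is forced to be finite, which is exactly what regularity of $a$ provides.
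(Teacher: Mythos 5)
Your proof is correct and follows essentially the same route as the paper: in the dense case you reproduce the paper's argument (mzip plus Bezout yields a regular element $m\in M$, and projectivity of $M$ then forces $M$ to be principal), with your dual-basis computation being precisely the content of the step the paper delegates to its Bourbaki citation. The extra pieces you supply --- the idempotent argument when $M^\perp\neq 0$ and the trivial converse direction --- are exactly the parts the paper treats as obvious (the former is the projective-socle dichotomy it invokes explicitly in the proof of Theorem~1), so they amount to welcome added detail rather than a genuinely different approach.
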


\begin{proof} If $M$ is a dense maximal ideal of $R$ and $R$ is mzip, then exists a
principal ideal $M_1 = mR$ such that $M_1^\perp = 0$ and $M_1 \subset M$. Obviously, $m$
is a regular element. Since $m \in M$ this is possible when $M$ is a
principal ideal~\cite{item10}. The proposition is proved.
\end{proof}

\begin{theorem} Let $R$ be a Bezout domain and for every non-zero element
$a \in R$ the factor-ring ${}^{R}\!/\!_{\rad(aR)}$ is mzip. Then $R$ is an elementary divisor
ring.
\end{theorem}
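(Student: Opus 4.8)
The plan is to deduce the result from the characterization of elementary divisor Bezout domains recorded in \cite{item4}: such a domain is an elementary divisor ring exactly when it is fractionally regular and of stable range $2$. Since a Bezout domain is automatically Hermite --- given $a,b$ with $(a,b)=dR$, writing $a=a_1d$, $b=b_1d$ and cancelling $d\neq0$ in a Bezout identity $d=xa+yb$ yields $xa_1+yb_1=1$ --- the whole problem is to verify these two conditions. I expect fractional regularity to follow cleanly from the hypothesis through Proposition~\ref{prop1}, while the stable-range condition is the genuinely delicate point.

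First I would establish fractional regularity. Fix a non-zero $a\in R$ and put $\bar R=R/\rad(aR)$. As a homomorphic image of a Bezout domain, $\bar R$ is a Bezout ring; it is reduced because we have factored out the radical of $aR$; and it is mzip by hypothesis. Proposition~\ref{prop1} applied to $\bar R$ then shows that $Q_{cl}(\bar R)$ is a Bezout Kasch ring, and reducedness survives in the localization $Q_{cl}(\bar R)$. It therefore suffices to check that a reduced Kasch ring $Q$ is von Neumann regular. For $x\in Q$ I would prove $xQ+(xQ)^\perp=Q$: otherwise $xQ+(xQ)^\perp$ sits inside a maximal ideal $M$, which by the Kasch property is an annihilator $M=(zQ)^\perp$ with $z\neq0$; but $x\in M$ gives $xz=0$, so $z\in(xQ)^\perp\subseteq M=(zQ)^\perp$, forcing $z^2=0$ and hence $z=0$, a contradiction. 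Writing $1=xu+v$ with $v\in(xQ)^\perp$ then gives $x=x^2u$, so $Q$ is regular. Thus $Q_{cl}(R/\rad(aR))$ is regular for every non-zero $a$, which is exactly fractional regularity.

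It remains to produce stable range $2$, and this is where I expect the main obstacle. The same reduced factors are the natural tool: each $Q_{cl}(R/\rad(aR))$, being regular, has stable range $1$, and the plan is to read the defining relation --- given $aR+bR+cR=R$, find $x,y$ with $(a+cx)R+(b+cy)R=R$ --- modulo $\rad(cR)$ (the case $c=0$ being immediate), solve it in the regular ring $Q_{cl}(R/\rad(cR))$ where the regular elements of the factor have become units, and then lift the solution back through the radical and along $R\to R/\rad(cR)$. The hard part will be controlling this double passage, making a coprimality relation obtained in the total quotient ring of the reduced factor descend to an honest unimodularity statement over $R$ itself; once this is done, the criterion of \cite{item4} yields that $R$ is an elementary divisor ring.
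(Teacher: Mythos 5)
Your first half is sound and follows the paper's own route: pass to $R/\rad(aR)$, note it is a reduced Bezout mzip ring, apply Proposition~\ref{prop1} to get that $Q_{cl}(R/\rad(aR))$ is a reduced Bezout Kasch ring, and conclude regularity. Your direct argument that a reduced Kasch ring is von Neumann regular (showing $xQ+(xQ)^\perp=Q$ and writing $x=x^2u$) is correct and in fact cleaner than the paper, which routes this step through rings with projective socle \cite{item8} and a citation to \cite{item7}. So fractional regularity is fully established.

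The gap is your third paragraph. You declare stable range $2$ to be ``the genuinely delicate point,'' propose to obtain it by solving the unimodular relation in $Q_{cl}(R/\rad(cR))$ and lifting back to $R$, and then explicitly leave that lifting uncontrolled --- so as written the proof is incomplete. Moreover, the plan aims at the wrong target: stable range $2$ has nothing to do with the mzip hypothesis or with the regular quotient rings; it holds for \emph{every} commutative Bezout domain, and this is exactly the known fact the paper absorbs into its citations of \cite{item4} and \cite{item11}. Indeed, you already proved what you need in your first paragraph: the Hermite property yields stable range $2$ by a two-line computation. Given $ar+bs+ct=1$, write $a=a_1d$, $b=b_1d$ with $a_1p+b_1q=1$; then $dw+ct=1$ where $w=a_1r+b_1s$. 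Put $x=qt$, $y=-pt$. The ideal $I=(a+cx)R+(b+cy)R$ contains $p(a+cqt)+q(b-cpt)=d$ and $b_1(a+cqt)-a_1(b-cpt)=ct$, hence contains $dw+ct=1$, so $I=R$. Substituting this for your lifting plan closes the gap and makes your argument a complete, essentially self-contained proof along the same lines as the paper's.
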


\begin{proof} Denote ${}^{R}\!/\!_{aR}=\overline{R}$ and $K={}^{\overline{R}}\!/\!_{\rad(\overline{R})}$. Obviously, $K$ is a reduced
ring. By  \cite{item8}, $K$ is a ring with projective socle. Then for every maximal ideal
$M$ of $K$, we have $M^\perp = 0$ or $M = eK$, where $e^2 = e\in K$. By Proposition~1, every dense maximal ideal contains a regular element. Obviously $Q_{cl}(K)$
is a Kasch reduced ring and by  \cite{item7}, $Q_{cl}(K)$ is a regular ring. Then $R$ is a
fractionaly regular ring of stable range 2. By  \cite{item4} and  \cite{item11}, $R$ is an elementary divisor
ring. The theorem is proved.
\end{proof}

Is obviously the following theorem:

\begin{theorem}\label{theor2} Let $R$ be a Bezout domain in which every maximal ideal is
principal. Then $R$ is an elementary divisor ring.
\end{theorem}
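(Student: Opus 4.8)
The plan is to obtain this statement as an immediate corollary of Theorem~1 by checking that its hypothesis holds under the present assumptions. That is, I would fix a non-zero $a \in R$ and prove that the reduced factor ring $R/\rad(aR)$ is mzip; Theorem~1 then delivers the conclusion directly. The whole proof is thus a reduction, which is presumably why the author calls it ``obvious.''

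First I would record that $R/\rad(aR)$ is again a commutative Bezout ring: every homomorphic image of a Bezout ring is Bezout, since a finitely generated ideal of the quotient is the image of a finitely generated, hence principal, ideal of $R$. By its very construction this ring is also reduced.

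Next I would locate its maximal ideals. By the correspondence theorem, the maximal ideals of $R/\rad(aR)$ are exactly the images of the maximal ideals $M$ of $R$ with $\rad(aR) \subseteq M$; since a maximal ideal is prime, this inclusion is equivalent to $a \in M$. Each such $M$ is principal by hypothesis, say $M = mR$, and its image in $R/\rad(aR)$ is generated by the residue class of $m$. Hence every maximal ideal of $R/\rad(aR)$ is principal, and in particular finitely generated.

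Having a Bezout ring in which every maximal ideal is finitely generated, I would invoke the implication recorded before Proposition~1 --- namely, that such a ring is mzip --- to conclude that $R/\rad(aR)$ is mzip. As $a$ was an arbitrary non-zero element, the hypothesis of Theorem~1 is verified, and Theorem~1 then yields that $R$ is an elementary divisor ring. I do not expect a genuine obstacle here: the entire content is the passage from principality of the maximal ideals of $R$ to finite generation of the maximal ideals of each quotient $R/\rad(aR)$, together with the already-established fact that finite generation of maximal ideals forces the mzip property in the Bezout setting. The only points requiring care are the maximal-ideal correspondence and the observation that images of principal ideals remain principal.
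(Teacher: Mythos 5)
Your proposal is correct and follows exactly the route the paper intends: the paper states Theorem~\ref{theor2} as an immediate consequence of Theorem~1, using the earlier observation that a (Bezout) ring whose maximal ideals are all finitely generated is mzip. Your write-up simply supplies the details the paper leaves implicit (Bezout and reduced pass to $R/\rad(aR)$, the maximal-ideal correspondence, and principality of images of principal ideals), all of which are verified correctly.
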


\begin{remark} It is easy to deduce that a reduced ring $R$ has Kasch $Q_{cl}(R)$ iff $Q_{cl}(R)$ is semisimple, i.e. it is a finite product of fields. Thus, a reduced $R$ with Kasch $Q_{cl}(R)$ has a finite number of  minimal prime ideals.
\end{remark}

By proposition~\ref{prop1} and theorem~\ref{theor2} we have.

\begin{theorem} Let $R$ be a Bezout domain in which every maximal ideal is principal. Then for every nonzero element $a\in R$ there are only finitely many prime ideals minimal over $a$.
\end{theorem}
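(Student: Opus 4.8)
The plan is to reduce the assertion to the finiteness of the set of minimal primes of the reduced quotient $K = {}^{R}\!/\!_{\rad(aR)}$, and then to read off the conclusion from the Remark. First I would fix a nonzero $a\in R$ and note that the prime ideals of $R$ minimal over $a$ are exactly the primes minimal over the ideal $aR$; these are in inclusion-preserving bijection with the minimal primes of ${}^{R}\!/\!_{aR}$. Since passing from $aR$ to $\rad(aR)$ only replaces the ideal by its radical, which does not alter which primes are minimal over it, these correspond precisely to the minimal prime ideals of $K$. Thus it suffices to prove that the reduced ring $K$ has only finitely many minimal primes.

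Next I would check that $K$ meets the hypotheses needed to apply Proposition~\ref{prop1} together with the Remark. The ring $K$ is a homomorphic image of the Bezout domain $R$, hence itself a Bezout ring, and it is reduced by construction. Its maximal ideals are the images of the maximal ideals of $R$ that contain $a$; since every maximal ideal of $R$ is principal by hypothesis, each such image is again principal in $K$. Hence $K$ is a Bezout ring in which every maximal ideal is finitely generated, and by the observation recorded just before Proposition~\ref{prop1} this forces $K$ to be an mzip ring.

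Now I would invoke Proposition~\ref{prop1} applied to $K$: being an mzip Bezout ring, its classical quotient ring $Q_{cl}(K)$ is a Bezout Kasch ring. Because $K$ is reduced, the Remark applies and tells us that $Q_{cl}(K)$ is semisimple, i.e. a finite product of fields, so that $K$ has only finitely many minimal prime ideals. Combining this with the correspondence established in the first step shows that only finitely many primes of $R$ are minimal over $a$, which is the desired conclusion.

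The step I expect to be the main obstacle is the middle one: verifying cleanly that principality of maximal ideals descends from $R$ to the quotient $K$, and that this suffices to trigger the mzip property \emph{for $K$} rather than only for $R$. The correspondence of minimal primes and the successive applications of Proposition~\ref{prop1} and the Remark are essentially bookkeeping, but one must make sure that all maximal ideals of $K$ are genuinely principal (equivalently, finitely generated), since it is exactly this that legitimizes the cited implication ``Bezout with finitely generated maximal ideals $\Rightarrow$ mzip'' in the setting of $K$.
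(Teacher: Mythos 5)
Your proposal is correct and is essentially the paper's own argument: the paper proves this theorem in one line by combining Proposition~1 with the Remark (reduced ring with Kasch classical quotient ring has finitely many minimal primes), applied to the mzip reduced Bezout ring $K={}^{R}\!/\!_{\rad(aR)}$, exactly as you reconstruct. Your write-up merely makes explicit the bookkeeping steps (minimal primes of $K$ correspond to primes minimal over $a$; principality of maximal ideals descends to $K$, giving mzip) that the paper leaves implicit.
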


By \cite{item12} we have

\begin{theorem} Let $R$ be a Bezout domain in which every maximal ideal is principal. Then each nonzero principal ideal $aR$ of $R$ can be written in the form $aR=P_1P_2\dots P_n$, where each $P_i$ has prime radical and the $P_i$ are pairwise comaximal.
\end{theorem}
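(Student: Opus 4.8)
The plan is to produce the factorization explicitly by localizing at the minimal primes of $aR$, using that a Bezout domain is Pr\"ufer, so each localization $R_{\mathfrak{p}}$ is a valuation domain. Fix a nonzero $a \in R$. By the preceding theorem there are only finitely many prime ideals $\mathfrak{p}_1, \dots, \mathfrak{p}_n$ minimal over $aR$. For each $i$ I would set $P_i = aR_{\mathfrak{p}_i} \cap R$, the contraction to $R$ of the extension of $aR$ to the valuation domain $R_{\mathfrak{p}_i}$, and claim that $aR = P_1 \cdots P_n$ with the $P_i$ pairwise comaximal and each $\sqrt{P_i}$ prime.

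First I would compute the radicals. Since $\mathfrak{p}_i$ is minimal over $a$, the only prime of $R_{\mathfrak{p}_i}$ containing $a$ is its maximal ideal $\mathfrak{p}_i R_{\mathfrak{p}_i}$; hence $\sqrt{aR_{\mathfrak{p}_i}} = \mathfrak{p}_i R_{\mathfrak{p}_i}$, and because the preimage of a radical is the radical of the preimage I obtain $\sqrt{P_i} = \mathfrak{p}_i R_{\mathfrak{p}_i} \cap R = \mathfrak{p}_i$, which is prime as required. Next I would prove comaximality: if $P_i + P_j$ were proper it would lie in a maximal ideal $\mathfrak{m}$, whence $\mathfrak{p}_i = \sqrt{P_i}$ and $\mathfrak{p}_j = \sqrt{P_j}$ both lie in $\mathfrak{m}$; but $R_{\mathfrak{m}}$ is a valuation domain, so its primes, and therefore the primes of $R$ contained in $\mathfrak{m}$, are totally ordered, forcing $\mathfrak{p}_i$ and $\mathfrak{p}_j$ to be comparable. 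This contradicts their being distinct minimal primes, so $P_i + P_j = R$.

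The key remaining step, and the one I expect to be the main obstacle, is the identity $aR = P_1 \cap \cdots \cap P_n$; once this is established, pairwise comaximality upgrades the intersection to the product $P_1 \cdots P_n$. The inclusion $aR \subseteq \bigcap_i P_i$ is immediate. For the reverse inclusion I would use that a domain is the intersection of its localizations at all maximal ideals, so $aR = \bigcap_{\mathfrak{m}} (aR_{\mathfrak{m}} \cap R)$, and then discard every $\mathfrak{m}$ except the minimal primes. If $a \notin \mathfrak{m}$ the factor equals $R$ and is harmless; if $a \in \mathfrak{m}$ then $\mathfrak{m}$ contains some $\mathfrak{p}_i$, and since $R_{\mathfrak{m}}$ is a further localization of $R_{\mathfrak{p}_i}$ one has $aR_{\mathfrak{p}_i} \cap R \subseteq aR_{\mathfrak{m}} \cap R$, so the $\mathfrak{p}_i$-factor already dominates the $\mathfrak{m}$-factor. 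Hence $\bigcap_i P_i \subseteq \bigcap_{\mathfrak{m}} (aR_{\mathfrak{m}} \cap R) = aR$. The delicate point is exactly this reduction from all maximal ideals to the finitely many minimal primes, where the absence of embedded behaviour is supplied by the total ordering of primes in the valuation localizations; it is also where the finiteness furnished by the previous theorem is essential, since it guarantees the resulting product is finite.
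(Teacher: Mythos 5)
Your construction of the factors $P_i = aR_{\mathfrak{p}_i} \cap R$ fails at exactly the step you flagged as delicate, and the failure is not repairable: you have the direction of localization backwards. If $\mathfrak{p}_i \subseteq \mathfrak{m}$, then $R_{\mathfrak{p}_i}$ is a further localization of $R_{\mathfrak{m}}$ (localizing at a smaller prime inverts more elements), so inside the quotient field $R_{\mathfrak{m}} \subseteq R_{\mathfrak{p}_i}$ and hence $aR_{\mathfrak{m}} \cap R \subseteq aR_{\mathfrak{p}_i} \cap R = P_i$ --- precisely the opposite of the inclusion $P_i \subseteq aR_{\mathfrak{m}} \cap R$ that your reduction from all maximal ideals to the minimal primes requires. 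What your argument establishes is only the trivial containment $aR \subseteq \bigcap_i P_i$ a second time. Worse, the identity $aR = \bigcap_i (aR_{\mathfrak{p}_i} \cap R)$ is genuinely false under the hypotheses of the theorem. Let $V$ be a valuation domain whose value group is $\mathbb{Z} \oplus \mathbb{Z}$ ordered lexicographically. Then $V$ is a Bezout domain and its (unique) maximal ideal is principal, generated by any element of value $(0,1)$, so the theorem applies to $V$. Take $a$ with $v(a)=(1,0)$. The unique prime minimal over $aV$ is $\mathfrak{q}$, the set of elements whose value has first coordinate at least $1$, and a direct computation gives $aV_{\mathfrak{q}} \cap V = \mathfrak{q}$: for instance, an element of value $(1,-1)$ lies in $aV_{\mathfrak{q}} \cap V$ but not in $aV$, since $(1,-1) < (1,0)$. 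So your $P_1$ strictly contains $aV$, whereas the factorization asserted by the theorem is here simply $aV = aV$ (its radical $\mathfrak{q}$ is already prime). Contractions of localizations at the minimal primes discard exactly the valuation-theoretic information carried between $\mathfrak{p}_i$ and the maximal ideals above it, which is why no argument along these lines can close the gap.

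The parts of your proposal that are correct --- $\sqrt{P_i} = \mathfrak{p}_i$ because radical commutes with contraction, and pairwise comaximality of distinct minimal primes because the primes inside a fixed maximal ideal of a Pr\"ufer domain form a chain --- are, together with the finiteness supplied by the preceding theorem, exactly the inputs the paper uses; its proof is then the single citation of Brewer--Heinzer \cite{item12}, whose criterion says a proper ideal factors into finitely many pairwise comaximal ideals with prime radical precisely when its minimal primes are finite in number and pairwise comaximal. The correct construction of the factors is not by localization but by the Chinese Remainder Theorem plus idempotent lifting: pairwise comaximality gives $R/\sqrt{aR} \cong \prod_i R/\mathfrak{p}_i$; the complete orthogonal family of idempotents of this product lifts modulo the nil ideal $\sqrt{aR}/aR$ to idempotents of $R/aR$ (idempotents always lift modulo nil ideals), so $R/aR$ itself decomposes as a finite product of rings each with prime nilradical; the kernels $P_i$ of the surjections $R \to R/aR \to (\text{$i$-th factor})$ are then pairwise comaximal with $\sqrt{P_i} = \mathfrak{p}_i$ and $P_1 P_2 \cdots P_n = \bigcap_i P_i = aR$. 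Replacing your localization construction by this idempotent-lifting argument turns the rest of what you wrote into a complete proof.
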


\vspace{-5mm}\small

\end{document}